\theoremstyle{plain}
\newtheorem{theorem}{Theorem}
\newtheorem{proposition}{Proposition}
\newtheorem{lemma}{Lemma}
\newtheorem{corollary}{Corollary}
\theoremstyle{plain}
\newtheorem{example}{Example}
\theoremstyle{plain}%
\newtheorem{definition}{Definition}%
\newtheorem{condition}{Condition}%
\newtheorem{assumption}{Assumption}%
\DeclareMathOperator{\Dist}{Dist}
\definecolor{gold}{rgb}{0.85,0.65,0}
\newcommand{\epr}{\hfill\hbox{\hskip 4pt \vrule width 5pt height 5pt depth 0pt}\vspace{0.0cm}\par}
\newcommand{\grad}{\ensuremath{\nabla}}
\newcommand{\set}[1]{\left\{#1\right\}}
\def\N{{\mathbb{N}}}
\def\R{{\mathbb{R}}}
\def\bbR{{\mathbb{R}}}
\DeclareMathOperator*{\argmin}{arg\,min}
\title{Convergence, Duality and Well-Posedness in Convex Bilevel Optimization}
\author[1]{Khanh-Hung Giang-Tran}
\author[2]{Nam Ho-Nguyen}
\author[3]{Fatma K{\i}l{\i}n\c{c}-Karzan}
\author[4]{Lingqing Shen}
\affil[1]{School of Operations Research and Information Engineering, Cornell University}
\affil[2]{Discipline of Business Analytics, The University of Sydney}
\affil[3]{Tepper School of Business, Carnegie Mellon University}
\begin{document}

\maketitle

\begin{abstract}
	We consider the \emph{convex bilevel optimization} problem, also known as \emph{simple bilevel programming}. Strong duality of the convex bilevel optimization problem is not guaranteed, due to the lack of Slater constraint qualification. Furthermore, convergence of algorithms is in general not guaranteed without further conditions due to possibility of super-optimal solutions. We show that strong duality (but not necessarily dual solvability) is exactly equivalent to ensuring correct asymptotic convergence of function values. We provide a simple condition that guarantees strong duality. Unfortunately, we show that this is not sufficient to guarantee convergence to the optimal solution set. We draw connections to Levitin-Polyak well-posedness, and leverage this together with our strong duality equivalence to provide another condition that ensures convergence to the optimal solution set with quantifiable bounds.
\end{abstract}

\section{Introduction}\label{sec:intro}

Consider a constrained convex optimization problem given by
\begin{equation} \label{eq:convex-problem}
    \begin{aligned}
        g^* := \min_{x \in X} g(x),
    \end{aligned}
\end{equation}
where $X$ is a finite-dimensional closed convex set and $g: X \to \bbR$ is a convex function. Here, $g^*$ is the optimal value of \cref{eq:convex-problem}. In general, problem \eqref{eq:convex-problem} might have multiple optimal solutions, and in this case it is natural to
consider a second objective to select the best solution among the optimal solutions with respect to an additional criterion.

Supposing that this is given by a convex function $f: {X}
\to \mathbb{R}$, this leads to the following so-called \emph{convex bilevel optimization problem} (also known as \emph{simple bilevel program}):
\begin{equation}\label{eq:convex-bilevel}
    \begin{aligned}
        f^* := \min_{x\in X_g} f(x), \quad\text {where } X_g:= \argmin_{z\in X} g(z).
    \end{aligned}
\end{equation}
Here, $f^*$ denotes the optimal value of the convex bilevel problem \cref{eq:convex-bilevel}, and $X_g$ is the set of minimizers for \cref{eq:convex-problem}.
Throughout this paper, we assume that problems \eqref{eq:convex-problem} and  \eqref{eq:convex-bilevel} are solvable.
Moreover, we denote an optimal solution of \eqref{eq:convex-bilevel} by $x^*$, satisfying $f(x^*)=f^*$ and $g(x^*)=g^*$.

In contrast to a standard single-level convex optimization problem, the presence of two objective functions in the convex bilevel problem \eqref{eq:convex-bilevel} brings in additional challenges, which we categorize into two groups.
First, we do not have an explicit representation of the optimal set $X_g$ in general. This rules out the possibility of applying projection-based algorithms as well as the conditional gradient method, making both the projection onto $X_g$ and the linear optimization oracle over $X_g$ intractable.

With this first challenge in mind, a stream of literature in this domain has alternatively considered the \emph{value function formulation} of \eqref{eq:convex-bilevel}:
\begin{equation} \label{eq:convex-bilevel-value}
    %\begin{aligned}
    \min_{x\in X} \set{ f(x) :\ g(x) \leq g^* }.
    %\end{aligned}
\end{equation}
This is now a convex optimization problem with a convex functional constraint. However, a critical issue, i.e., our second challenge, is that \eqref{eq:convex-bilevel-value} does not satisfy strict feasibility and hence the standard Slater constraint qualification condition fails. {As a result, it is possible that the Lagrangian dual of \eqref{eq:convex-bilevel-value} is unsolvable, and in such a case \eqref{eq:convex-bilevel-value} will violate a critical assumption exploited in the convergence analysis of most primal-dual methods. One may attempt to enforce Slater's condition by adding a small $\epsilon_g > 0$ to the right-hand side of the constraint, i.e., $g(x) \leq g^* + \epsilon_g$, but this approach provides guarantees only  in terms of $\epsilon_g$-optimality and does not solve the actual problem \eqref{eq:convex-bilevel-value}. 
In fact, it was demonstrated in \citep[Appendix D]{JiangEtAl2023} that such an approach is numerically unstable.

\paragraph{Contributions.} Approaches to solve single-level optimization problems cannot by default be applied to \cref{eq:convex-bilevel} (or equivalently \cref{eq:convex-bilevel-value}), and thus it requires specific algorithms for the bilevel setting, with different analyses. In this short communication, we examine the algorithmic guarantees which exist for various bilevel optimization algorithms, and provide conditions to reassure us of convergence. Algorithmic guarantees for \cref{eq:convex-bilevel} generally ensure that an iteratively computed sequence $\{x_t\}_{t \geq 1} \subset X$ satisfies the following inequalities:
\begin{align}\label{eq:suboptimality-guarantee}
&f(x_t) \leq f^* + r_{f,t}, \quad g(x_t) \leq g^* + r_{g,t}, \quad \textbf{where} \lim_{t \to \infty} r_{f,t} = \lim_{t \to \infty} r_{g,t} = 0.
\end{align}
Unfortunately, as we will demonstrate in \cref{sec:conditions}, this is \emph{not} sufficient to ensure that $\{x_t\}_{t \geq 1}$ converges to a solution of \cref{eq:convex-bilevel}. Our study aims to analyze this carefully, and provide conditions which ensure convergence. It turns out that this can be studied by examining the regularity of the Lagrange dual of \cref{eq:convex-bilevel-value}. As mentioned above, \cref{eq:convex-bilevel-value} does not satisfy Slater's condition is by definition. That said, we can still analyze its Lagrange dual, and examine conditions for strong duality to hold. Here, we must differentiate between two cases: (i) one where the optimal dual value is equal to $f^*$; and a stronger case where additionally there is an optimal dual multiplier $\lambda^*$. We will show that the first case is in fact equivalent to ensuring that sequences satisfying \cref{eq:suboptimality-guarantee} will in fact converge to an optimal solution of \cref{eq:convex-bilevel}. This uncovers a useful hidden regularity structure in \cref{eq:convex-bilevel} that, to the best of our knowledge, has not been elaborated on previously. We conclude the paper by presenting two conditions to ensure that strong duality holds, and examine how these conditions have been implicitly present in existing algorithmic work.

\paragraph{Related work.} Much of the work on convex bilevel optimization involves the design and analysis of algorithms to solve \cref{eq:convex-bilevel}. There are several different schemes, including \emph{iterative regularization} \citep{Cabot2005,Solodov2006,SolodovMikhail2007,HelouSimões2017,Malitsky2017,DempeEtAl2019,AminiEtAl2019,KaushikYousefian2021,ShenLingqing2023,GiangTranEtAL2024,MerchavEtAl2024,SamadiEtAl2024,ChenEtAl2024,ShternTaiwo2025}, \emph{sublevel set methods} \citep{BeckSabach2014,JiangEtAl2023,CaoEtAl2023,DoronShtern2022,CaoEtAl2024,WangEtAl2024} and \emph{sequential averaging} \citep{SabachEtAl2017,ShehuEtAl2021,MerchavSabach2023}.

Our work focuses on the duality theory of \cref{eq:convex-bilevel}, namely the Lagrange dual of its value function formulation \cref{eq:convex-bilevel-value}. \citet{FriedlanderEtAl2008} show that correctness of the exact regularization technique is equivalent to the existence of an optimal dual multiplier of \cref{eq:convex-bilevel-value}, and they demonstrate a number of special cases where exact regularization holds, e.g., conic programs and exact penalty methods for convex functional constraints. \citet{DempeEtAl2010,FrankeEtAl2018} provide conditions to guarantee existence of an optimal dual multiplier by using more general constraint qualifications than Slater. In contrast, our work focuses on when strong duality holds \emph{regardless} of whether the dual is solvable or not, and in particular we highlight that this case has important algorithmic implications for \cref{eq:convex-bilevel}. Consequently, we are able to provide weaker conditions for strong duality, and thus convergence assurance, than the ones above that guarantee that an optimal dual multiplier exists. \citet{FriedlanderEtAl2008} observe that, under boundedness assumptions, strong duality holds (without solvability), which is a simple consequence of Sion's minimax theorem, though they did not explore this further.

Finally, we note that there is a broader literature on general bilevel optimization problems with two sets of variables, which are non-convex; see \citet{Dempe2002book} for details. This literature also examines optimality conditions via Lagrange multipliers, though the theory is different to the simpler problem \cref{eq:convex-bilevel} that we consider.

\paragraph{Notation and basic assumption.}
We let $\R$ and $\N$ denote the real and natural numbers, respectively. Given any positive integer $T$, let $[T] := \{1, 2, \dots, T\}$. For any $a\in\R$, we let $[a]_+:=\max\{0,a\}$. We define $\{a_t\}_{t\in[T]}$ to be the sequence of elements $(a_1, a_2, \dots, a_T)$. Let $\|\cdot\|_2$ be the Euclidean norm. For a convex function $f : X \to \R$, we denote the gradient at $x \in X$ by $\nabla f (x)$. We will assume that \cref{eq:convex-bilevel} satisfies the following properties throughout the paper.
\begin{assumption}\label{ass:basic}
The set $X$ is closed and convex, and belongs to some finite-dimensional Euclidean space. The functions $f$ and $g$ are convex and real-valued on all of $X$. Furthermore, the minimization problem \cref{eq:convex-problem} is solvable, i.e., $g^* < \infty$ and $X_g$ defined in \cref{eq:convex-bilevel} is non-empty and closed. Finally, the bilevel optimization problem \cref{eq:convex-bilevel} is solvable, and we denote the set of optimal solutions to \cref{eq:convex-bilevel} as
\[ X^* := \argmin_{x \in X_g} f(x) = \{ x \in X_g : f(x) = f^* \}, \]
which is non-empty and closed.
\epr
\end{assumption}

\section{Super-optimality and Lagrange duality}\label{sec:lagrange-dual}

The central issue in solving \cref{eq:convex-bilevel} is, in practice, we very rarely encounter a feasible solution $\Bar{x} \in X_g$. Instead, algorithms will generate a point $x_t \in X$ with $g(x_t) > g^*$. Since $X_g \subset \left\{ x \in X : g(x) \leq g(x_t) \right\}$, the point $x_t$ may be \emph{super}-optimal for \cref{eq:convex-bilevel}, meaning that $f(x_t) < f^*$. As iterative algorithms progress, the accuracy of $x_t$ (hopefully) improves, i.e., it is shown that $g(x_t) \leq g^* + r_{g,t}$ where $r_{g,t} \geq 0$ are error terms such that $r_{g,t} \to 0$. Since $g(x_t) \geq g^*$, this implies that $\lim_{t \to \infty} g(x_t) = g^*$. For the outer-level objective $f$, it is often similarly shown that $f(x_t) \leq f^* + r_{f,t}$, where $r_{f,t} \geq 0$ are error terms such that $r_{f,t} \to 0$. Since we might have $f(x_t) < f^*$, these inequalities are only sufficient to deduce
\begin{equation} \label{eq:weak-asymptotic}
\limsup_{t \to \infty} f(x_t)\leq f^*, \quad \lim_{t \to \infty} g(x_t) = g^*,
\end{equation}
but they do \emph{not} guarantee $f(x_t) \to f^*$. Indeed, we may have a subsequence $\{x_{t_k}\}_{k \geq 1}$ for which $\underline{f} := \lim_{k \to \infty} f(x_{t_k}) < f^*$. On the other hand, it is straightforward to show that any limit point of $\{x_t\}_{t \geq 1}$ is an optimal solution of \cref{eq:convex-bilevel}. That said, it is difficult to infer in an online fashion whether the current $f(x_t)$ is close to $f^*$ or $\underline{f}$. In other words, we have no assurance of the approximation quality of $f(x_t)$.

We now present an example that demonstrates how examining $f(x_t)$ without further assurances can indeed be misleading for estimating $f^*$.
\begin{example}\label{ex:counter-example}
Suppose that
\begin{equation}\label{eq:counter}
	\begin{aligned}
		f(x)&:=x_2^2-2x_2 + [100-x_1]_+^2,\quad
		g(x):= \frac{x_2^2}{x_1}, \quad X := \set{x \in \R^2:\, x_1 \geq 1,~ 0 \leq x_2 \leq 1}.
	\end{aligned}
\end{equation}
Clearly, $f,g,X$ are convex. Furthermore, we have $X_g = \{(s,0) \in \R^2 :\, s \geq 1\}$, $g^* = 0$, $X^* = \{ (s,0) \in \bbR^2 : s \geq 100 \}$ and $f^* = 0$. However, consider now the sequence $\{x_t\}_{t \geq 1} \subset X$ given by $x_t := (t,1)$ for $t \geq 1$. Then $g(x_t) = 1/t \to 0 = g^*$, but $f(x_t) = [100-t]_+^2 - 1 \to -1 < f^*$. Thus, $\{x_t\}_{t \geq 1}$ satisfies \cref{eq:weak-asymptotic} but $f(x_t)$ is not a good estimate for $f^*$.
\epr
\end{example}

To guard against situations like in \cref{ex:counter-example}, and recognizing the numerous algorithmic works which provide guarantees of the form \cref{eq:weak-asymptotic}, we look for conditions which ensure that whenever \cref{eq:weak-asymptotic} holds, the following also holds:
\begin{equation} \label{eq:strong-asymptotic}
	\lim_{t \to \infty}f(x_t) = f^*, \quad \lim_{t \to \infty} g(x_t) = g^*.
\end{equation}
Another possible condition we consider is the that the distance to the optimal solution set of \cref{eq:convex-bilevel} decreases to $0$:
\begin{equation} \label{eq:strong-solution}
	\lim_{t \to \infty} \Dist(x_t,X^*) = 0, \quad \text{where } \Dist(x,X^*) := \inf_{z \in X^*} \|x-z\|_2.
\end{equation}

Interestingly, we uncover a relationship between Lagrange duality and \cref{eq:strong-asymptotic}. The Lagrange dual of \cref{eq:convex-bilevel} is obtained via \cref{eq:convex-bilevel-value}, and is given by
\begin{equation}\label{eq:lagrange-dual}
d^* := \sup_{\lambda \geq 0} q(\lambda), \quad \text{where} \quad q(\lambda) := \inf_{x \in X} \left\{ f(x) + \lambda (g(x) - g^*) \right\}.
\end{equation}
Here, $\lambda \geq 0$ is the dual variable, $q(\cdot)$ is the (concave) dual objective function, and $d^*$ is the optimal dual value. Since $g(x) \geq g^*$ for any $x \in X$, we can deduce that $q(\lambda)$ is non-decreasing in $\lambda$. This means that $d^* = \sup_{\lambda \geq 0} q(\lambda) = \lim_{\lambda \to \infty} q(\lambda)$. Recalling that $f^*$ is the optimal value of \cref{eq:convex-bilevel-value}, by weak duality we always have $d^* \leq f^*$. We will use the following definition of strong duality.
\begin{definition}\label{def:strong-duality}
We say that \emph{strong duality} holds for \cref{eq:convex-bilevel} when $d^* = f^*$.
\epr
\end{definition}
Note that \cref{def:strong-duality} does \emph{not} guarantee that the dual is solvable, but only that $q(\lambda) \to f^*$ as $\lambda \to \infty$. Therefore, ensuring that strong duality holds requires much weaker conditions than ensuring that the dual is solvable, as was studied in previous works \citep{FriedlanderEtAl2008,DempeEtAl2010,FrankeEtAl2018}.

Our main result is that ensuring convergence \cref{eq:strong-asymptotic} is \emph{exactly equivalent} to strong duality holding for \cref{eq:convex-bilevel}.
\begin{theorem}\label{thm:duality-equivalence}
Considering \cref{eq:convex-bilevel} when $f,g,X$ are convex, the following are equivalent:
\begin{enumerate}
    \item Any sequence $\{x_t\}_{t \geq 1} \subset X$ that satisfies \cref{eq:weak-asymptotic} also satiesfies \cref{eq:strong-asymptotic}.

    \item Strong duality (\cref{def:strong-duality}) holds for \cref{eq:convex-bilevel}.
\end{enumerate}
\end{theorem}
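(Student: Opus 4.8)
The plan is to prove the two implications separately, handling $(2)\Rightarrow(1)$ by a direct Lagrangian estimate and $(1)\Rightarrow(2)$ by contraposition through the value function of \cref{eq:convex-bilevel-value}. For $(2)\Rightarrow(1)$, suppose strong duality holds and let $\{x_t\}_{t\ge1}$ satisfy \cref{eq:weak-asymptotic}; the only thing left to prove is $\liminf_{t}f(x_t)\ge f^*$, since $\limsup_{t}f(x_t)\le f^*$ is already assumed. First I would fix an arbitrary $\lambda\ge0$ and use the definition of $q(\lambda)$ as an infimum to write $q(\lambda)\le f(x_t)+\lambda(g(x_t)-g^*)$, i.e.\ $f(x_t)\ge q(\lambda)-\lambda(g(x_t)-g^*)$. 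Taking $\liminf_{t\to\infty}$ and using $g(x_t)-g^*\to0$ with $\lambda$ fixed yields $\liminf_{t}f(x_t)\ge q(\lambda)$; taking the supremum over $\lambda\ge0$ gives $\liminf_{t}f(x_t)\ge d^*=f^*$. Combined with the assumed $\limsup$ bound this forces $f(x_t)\to f^*$, while $g(x_t)\to g^*$ is part of \cref{eq:weak-asymptotic}. This direction is routine and needs no boundedness hypotheses.

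For $(1)\Rightarrow(2)$ I would argue the contrapositive: assuming $d^*<f^*$, construct a sequence obeying \cref{eq:weak-asymptotic} but not \cref{eq:strong-asymptotic}. The device is the value function $p(u):=\inf\set{f(x):x\in X,\ g(x)-g^*\le u}$. Since $g\ge g^*$ on $X$, $p$ is non-increasing with $p(0)=f^*$ and $p(u)\le f^*<\infty$ for all $u\ge0$, and convexity of $f,g,X$ makes $p$ convex, so its right-limit $\ell:=\lim_{u\downarrow0}p(u)\le f^*$ exists. The crux is to identify $d^*$ with this right-limit. I would first record the elementary identity $q(\lambda)=\inf_{u}\set{p(u)+\lambda u}$ for $\lambda\ge0$, obtained by rewriting the inner infimum of \cref{eq:lagrange-dual} over the level sets of $g$. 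Then I would show $\ell\le d^*$: at an interior point $u_0>0$ take a subgradient $s\in\partial p(u_0)$, which satisfies $s\le0$ because $p$ is non-increasing; the supporting inequality $p(u)\ge p(u_0)+s(u-u_0)$ gives, with $\lambda:=-s\ge0$, that $p(u)+\lambda u\ge p(u_0)+\lambda u_0\ge p(u_0)$ for all $u$, whence $d^*\ge q(\lambda)\ge p(u_0)$; letting $u_0\downarrow0$ yields $d^*\ge\ell$. The degenerate case, where $p\equiv-\infty$ on some interval $(0,u_0)$ and hence $\ell=-\infty\le d^*$, is handled separately and only makes the construction easier.

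With $\ell\le d^*<f^*$ in hand I would finish the construction: pick $\gamma\in(d^*,f^*)$; since $p(1/t)\to\ell<\gamma$, for all large $t$ there is $x_t\in X$ with $g(x_t)-g^*\le1/t$ and $f(x_t)<\gamma$. Then $0\le g(x_t)-g^*\le1/t\to0$ gives $g(x_t)\to g^*$, and $\limsup_{t}f(x_t)\le\gamma\le f^*$, so \cref{eq:weak-asymptotic} holds; but $f(x_t)<\gamma<f^*$ stays bounded away from $f^*$, so \cref{eq:strong-asymptotic} fails, contradicting statement~1. The main obstacle is precisely the identification $d^*=\lim_{u\downarrow0}p(u)$ (more exactly the inequality $\lim_{u\downarrow0}p(u)\le d^*$): one must handle the convexity and possible impropriety of $p$ with care, since $f$ need not be bounded below on $X$, which is exactly what obstructs building the bad sequence directly from near-minimizers of $q(\lambda)$ and forces the passage through $p$.
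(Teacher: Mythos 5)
Your proof is correct, and while the forward direction $(2)\Rightarrow(1)$ is essentially the paper's argument (the paper phrases it with an $\epsilon$-near-optimal multiplier $\lambda_\epsilon$ rather than a supremum over fixed $\lambda$, but the estimate $f(x_t)\ge q(\lambda)-\lambda(g(x_t)-g^*)$ is identical), your converse direction takes a genuinely different route. The paper proves a standalone lemma (\cref{lemma:dual-value-sequence}) constructing a sequence with $g(x_t)\to g^*$ and $\limsup_t f(x_t)\le d^*$ directly from near-minimizers of $q(t+\lambda_0)$, with a separate convex-combination-with-$x^*$ trick to handle $d^*=-\infty$; you instead pass through the perturbation function $p(u)=\inf\set{f(x):x\in X,\ g(x)-g^*\le u}$, use the conjugacy identity $q(\lambda)=\inf_u\set{p(u)+\lambda u}$, and establish $\lim_{u\downarrow 0}p(u)\le d^*$ by a subgradient of $p$ at an interior point $u_0>0$ (the degenerate case $p\equiv-\infty$ near $0$ forcing $d^*=-\infty$ is correctly dispatched). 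This is the classical perturbation-duality viewpoint — strong duality as lower semicontinuity of the value function at the origin — and it is arguably more conceptual and cleanly isolates where convexity of $f,g,X$ enters (convexity of $p$). What it does not give you is the paper's \cref{lemma:dual-value-sequence} as a reusable object: that lemma's explicit sequence attaining $\limsup_t f(x_t)\le d^*$ is invoked again in the proofs of Propositions~\ref{prop:dist-g-strong-duality} and~\ref{prop:compact-equivalence}, whereas your construction only produces a sequence with $\limsup_t f(x_t)\le\gamma$ for an arbitrary $\gamma>d^*$, which suffices for the theorem but is slightly weaker as a building block.
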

We will use the following lemma in the proof of \cref{thm:duality-equivalence}, as well as subsequent results.
\begin{lemma}\label{lemma:dual-value-sequence}
There exists a sequence $\{x_t\}_{t \geq 1} \subset X$ such that $g(x_t) \to g^*$ and $\limsup_{t \to \infty} f(x_t) \leq d^*$.
\end{lemma}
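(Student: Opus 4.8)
The plan is to realize the dual value $d^*$ by taking near-minimizers of the dual subproblems that define $q$, and then read off both asymptotics from a single inequality. Recall from the text that $d^* = \lim_{\lambda \to \infty} q(\lambda)$, with $q$ as in \cref{eq:lagrange-dual}. First I would fix any sequence $\lambda_t \to \infty$ (say $\lambda_t = t$), and for each $t$ select $x_t \in X$ that comes within $1/t$ of the infimum defining $q(\lambda_t)$, i.e.\ $f(x_t) + \lambda_t (g(x_t) - g^*) \le q(\lambda_t) + 1/t$.

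The $f$-bound is then immediate: since $g(x) \ge g^*$ for every $x \in X$, the penalty term $\lambda_t(g(x_t) - g^*)$ is nonnegative, so $f(x_t) \le q(\lambda_t) + 1/t \le d^* + 1/t$, and hence $\limsup_{t} f(x_t) \le d^*$. The feasibility bound $g(x_t) \to g^*$ is the crux and does not follow so directly. The idea is to fix a multiplier $\mu \ge 0$ for which $q(\mu)$ is finite; subtracting the infimum inequality $q(\mu) \le f(x_t) + \mu(g(x_t) - g^*)$ from the near-optimality inequality gives $(\lambda_t - \mu)(g(x_t) - g^*) \le d^* - q(\mu) + 1/t$. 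Since $q(\mu) \le d^*$, the right-hand side is bounded, so dividing by $\lambda_t - \mu \to \infty$ forces $0 \le g(x_t) - g^* \to 0$, as wanted.

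The main obstacle is precisely the hypothesis used in the last step: that some $q(\mu)$ is finite, equivalently $d^* > -\infty$. This is where \cref{ass:basic} (solvability of the bilevel problem) must be used, and I would route through the perturbation value function $p(u) := \inf\{f(x) : x \in X,\ g(x) - g^* \le u\}$, which is convex and nonincreasing with $p(0) = f^*$ finite. If $p(u) > -\infty$ for every $u > 0$, then $p$ is a finite-valued nonincreasing convex function on $[0,\infty)$, and a short duality argument identifies $d^* = \lim_{u \downarrow 0} p(u) = \sup_{u > 0} p(u)$, which is finite (it is bounded below by $p(u_1)$ for any fixed $u_1 > 0$); hence a finite $q(\mu)$ exists and the argument above applies verbatim.

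It remains to dispose of the degenerate case $d^* = -\infty$. Here I would first observe that $d^* = -\infty$ forces $p(u_0) = -\infty$ for some $u_0 > 0$, and then a convex-combination argument with a fixed optimal $x^* \in X^*$ (using $g((1-\theta)x^* + \theta x) - g^* \le \theta(g(x) - g^*)$ together with convexity of $f$) propagates this to $p(u) = -\infty$ for all $u \in (0, u_0]$. In that case the desired sequence is built directly: choose $x_t \in X$ with $g(x_t) - g^* \le \min\{u_0, 1/t\}$ and $f(x_t) \le -t$, so that $g(x_t) \to g^*$ while $f(x_t) \to -\infty = d^*$. Either way the required sequence exists, and I expect the bookkeeping around this finiteness dichotomy — rather than either explicit construction — to be the only delicate part of the write-up.
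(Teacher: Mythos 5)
Your proposal is correct and follows essentially the same route as the paper: split on whether $d^*$ is finite, take $1/t$-near-minimizers of the Lagrangian at multipliers $\lambda_t \to \infty$ and subtract the inequality at a fixed $\mu$ with $q(\mu)$ finite to force $g(x_t) \to g^*$, and in the degenerate case $d^* = -\infty$ form convex combinations with a fixed $x^* \in X^*$ to drive $f$ to $-\infty$ while keeping $g$ near $g^*$. The only divergence is your detour through the perturbation function $p(u)$, which is sound but unnecessary: the existence of a finite $q(\mu)$ is immediate from $d^* > -\infty$ by definition of the supremum, which is all the paper uses.
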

\begin{proof}[Proof of \cref{lemma:dual-value-sequence}]
We consider two cases: $d^* > -\infty$ and $d^* = \infty$. In the first case, there must exist some $\lambda_0 \geq 0$ such that $q(\lambda_0) = \inf_{x \in X} \left\{ f(x) + \lambda_0 (g(x) - g^*) \right\} > -\infty$, and consequently, $q(\lambda) \geq q(\lambda_0) > -\infty$ for any $\lambda \geq \lambda_0$. For each $t \geq 1$, choose $x_t \in X$ to be a point that satisfies $f(x_t) + (t+\lambda_0) (g(x_t) - g^*) \leq q(t+\lambda_0) + 1/t$. We also have $f(x_t) + \lambda_0 (g(x_t) - g^*) \geq q(\lambda_0)$, so combining with $f(x_t) + (t+\lambda_0) (g(x_t) - g^*) \leq q(t+\lambda_0)$ we get
\[ g(x_t) - g^* \leq \frac{1}{t} \left( q(t+\lambda_0) + 1/t - q(\lambda_0) \right). \]
This implies that $g(x_t) \to g^*$. Now observe that
\begin{align*}
f(x_t) + \lambda_0 (g(x_t) - g^*) &\leq f(x_t) + (t+\lambda_0) (g(x_t) - g^*)\\
&\leq q(t+\lambda_0) + 1/t.
\end{align*}
Taking limit superior of all terms in the inequality, we have
\[ \limsup_{t \to \infty} f(x_t) = \limsup_{t \to \infty} \left( f(x_t) + \lambda_0 (g(x_t) - g^*) \right) \leq \limsup_{t \to \infty} \left( q(t+\lambda_0) + 1/t \right) = d^*. \]
The first equality follows because $g(x_t) \to g^*$, and the last equality follows since $\lim_{t \to \infty} = d^*$.

Now we consider the second case when $d^* = -\infty$. Then for $t \geq 1$, let $z_t \in X$ be a point such that $f(z_t) + t(g(z_t) - g^*) \leq -t$, and define $\theta_t = \frac{1}{\sqrt{t} \max\{1, g(z_t) - g^*\}} \in [0,1]$. Let $x^* \in X^*$ be a solution to \cref{eq:convex-bilevel}, and define $x_t = \theta_t z_t + (1-\theta_t) x^* \in X$. Notice that
$g(x_t) \leq \theta_t g(z_t) + (1-\theta_t) g(x^*) = g^* + \theta_t (g(z_t) - g^*) = g^* + \frac{g(z_t) - g^*}{\sqrt{t} \max\{1,g(z_t)-g^*\}} \leq g^* + 1/\sqrt{t} \to g^*$. Therefore $\lim_{t \to \infty} g(x_t) = g^*$. On the other hand, $f(x_t) \leq \theta_t f(z_t) + (1-\theta_t) f(x^*)$, and by definition of $z_t$, we have $f(z_t) \leq -t(1 + g(z_t) - g^*)$, so $\theta_t f(z_t) \leq -\frac{\sqrt{t} (1+g(z_t) - g^*)}{\max\{1,g(z_t)-g^*\}}$. But this implies $\theta_t f(z_t) \to -\infty$ as $t \to \infty$, therefore $\limsup_{t \to \infty} f(x_t) = -\infty = d^*$.
\end{proof}

\begin{proof}[Proof of \cref{thm:duality-equivalence}]
Suppose that strong duality holds. Let $\{x_t\}_{t \geq 1}$ be a sequence satisfying \cref{eq:weak-asymptotic}. Then given any $\epsilon > 0$, there exists $\lambda_\epsilon \in [0,\infty)$ such that
\[
\inf_{x \in X} \left\{f(x)+\lambda_\epsilon(g(x)-g^*)\right\} \geq f^* -\epsilon.
\]
This implies that for any $t \geq 1$ we have
$f(x_t) \geq f^* -\lambda_\epsilon(g(z_t)-g^*)-\epsilon$. By taking limit inferior as $t\to\infty$ on both sides, and recognizing that $\lim_{t \to \infty} g(x_t) = g^*$, we obtain $\liminf_{t \to \infty} f(x_t) \geq f^* -\epsilon$. Since $\epsilon >0$ is chosen arbitrarily, $\liminf_{t \to \infty} f(z_t) \geq f^*$, which implies \eqref{eq:strong-asymptotic}.

Suppose now that strong duality does not hold, so $d^* = \sup_{\lambda \geq 0} q(\lambda) < f^*$. \cref{lemma:dual-value-sequence} states that there exists a sequence $\{x_t\}_{t \geq 1}$ such that $g(x_t) \to g^*$ and $\limsup_{t \to \infty} f(x_t) \leq d^*$. Since $d^* < f^*$, \cref{eq:weak-asymptotic} holds for $\{x_t\}_{t \geq 1}$, but not \cref{eq:strong-asymptotic}.
\end{proof}

\section{Conditions for strong duality and convergence}\label{sec:conditions}

We now provide two conditions to ensure that strong duality holds for \cref{eq:convex-bilevel}, thereby (equivalently) ensuring that any algorithm which guarantees \cref{eq:weak-asymptotic} has assurance that \cref{eq:strong-asymptotic} holds also.
\begin{condition}\label{cond:dist-g}
If $\{x_t\}_{t \geq 1} \subset X$ is a sequence such that $g(x_t) \to g^*$, we have $\Dist(x_t,X_g) \to 0$.
\epr
\end{condition}
\begin{condition}\label{cond:compact}
The set of optimal solutions $X^*$ for \cref{eq:convex-bilevel} is compact.
\epr
\end{condition}

\cref{cond:dist-g} generalizes, for example, error bound-type conditions on $g$, where $\tau \Dist(x,X_g)^r \leq g(x) - g^*$. Error bounds are used in several works \citep[e.g.,][]{JiangEtAl2023,CaoEtAl2024,MerchavEtAl2024} to enable stronger guarantees for algorithms solving \cref{eq:convex-bilevel}.

\cref{cond:compact} can be shown to be equivalent to coercivity of the functions $h = \max\{f,g\}$ and $\bar{h} = \max\{f-f^*,g-g^*\}$. \cref{cond:compact}, or more restrictive versions of it, has appeared in various works. \citet{Cabot2005,Solodov2006,SolodovMikhail2007,Malitsky2017,DempeEtAl2019} assume exactly \cref{cond:compact}. \citet{HelouSimões2017,AminiEtAl2019,KaushikYousefian2021,ShenLingqing2023,GiangTranEtAL2024,JiangEtAl2023,CaoEtAl2024} employ the assumption that $X$ is bounded to prove guarantees for their algorithms. This immediately implies \cref{cond:compact} holds. Finally, \citet{BeckSabach2014,SabachEtAl2017,ShehuEtAl2021,DoronShtern2022,MerchavSabach2023} use the assumption that $f$ is coercive, which also implies \cref{cond:compact}. This is commonly motivated by learning problems where $g$ models the prediction error and $f$ is a regularization function, e.g., the $\ell_1$- or $\ell_2$-norm.

\cref{cond:dist-g} guarantees that strong duality holds, and hence \cref{eq:weak-asymptotic} implies \cref{eq:strong-asymptotic}.

\begin{proposition}\label{prop:dist-g-strong-duality}
Suppose that \cref{cond:dist-g} holds for \cref{eq:convex-bilevel}. Then strong duality holds, and, consequently, any sequence satisfying \cref{eq:weak-asymptotic} also satisfies \cref{eq:strong-asymptotic}.
\end{proposition}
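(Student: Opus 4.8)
The plan is to reduce everything to the single inequality $d^* \geq f^*$. Since weak duality already gives $d^* \leq f^*$ (as noted before \cref{def:strong-duality}), this would yield $d^* = f^*$, i.e. strong duality; the ``consequently'' clause then follows immediately by invoking the implication $(2)\Rightarrow(1)$ of \cref{thm:duality-equivalence}. So the entire burden is the lower bound on $d^*$.

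To produce it, I would first call on \cref{lemma:dual-value-sequence} to obtain a sequence $\{x_t\}_{t\geq 1}\subset X$ with $g(x_t)\to g^*$ and $\limsup_{t\to\infty} f(x_t)\leq d^*$. Because $g(x_t)\to g^*$, \cref{cond:dist-g} applies and gives $\Dist(x_t,X_g)\to 0$. As $X_g$ is nonempty and closed, for each $t$ I can choose a nearest point $y_t\in X_g$ with $\norm{x_t-y_t}_2=\Dist(x_t,X_g)\to 0$, and since $y_t\in X_g$ we have $f(y_t)\geq f^*$. The objective is now to transfer this bound from $y_t$ to $x_t$, i.e. to show $\liminf_{t\to\infty} f(x_t)\geq f^*$: combined with $\limsup_{t\to\infty} f(x_t)\leq d^*\leq f^*$ this forces $\liminf=\limsup=f^*=d^*$.

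The transfer step is where convexity enters and is the heart of the matter. Picking any subgradient $s_t\in\partial f(y_t)$, the subgradient inequality gives
\[ f(x_t) \;\geq\; f(y_t) + \ip{s_t,\, x_t-y_t} \;\geq\; f^* - \norm{s_t}_2\,\Dist(x_t,X_g), \]
so it suffices to prove $\norm{s_t}_2\,\Dist(x_t,X_g)\to 0$. Since a finite-valued convex function is locally Lipschitz, $\norm{s_t}_2$ is bounded as soon as the points $y_t$ remain in a bounded region, and then the product vanishes because $\Dist(x_t,X_g)\to 0$; note this step also rules out $d^*=-\infty$.

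The main obstacle I anticipate is precisely this last point when $X_g$ is unbounded: a priori the $y_t$ may escape to infinity, where the local Lipschitz modulus of $f$ degrades and $\norm{s_t}_2$ can blow up, so that $\norm{s_t}_2\,\Dist(x_t,X_g)$ need not tend to $0$. The natural remedy I would try is to replace $\{x_t\}$ by a bounded ``pulled-back'' sequence: fixing some $x^*\in X^*$ and setting $w_t=(1-\alpha_t)x^*+\alpha_t x_t$ keeps $g(w_t)\to g^*$ for every $\alpha_t\in[0,1]$ while $f(w_t)\leq f^*+\alpha_t\bigl(f(x_t)-f^*\bigr)$, so that choosing $\alpha_t$ to cap $\norm{w_t-x^*}_2$ confines the analysis to a fixed ball on which $f$ is Lipschitz. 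Making this reduction quantitatively consistent with the super-optimality margin is the delicate part, and I expect controlling the subgradient-times-distance term (or imposing an equivalent boundedness/coercivity hypothesis on the relevant level sets) to be the real crux on which the proof turns.
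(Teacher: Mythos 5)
Your reduction is exactly the paper's: take the sequence from \cref{lemma:dual-value-sequence}, use \cref{cond:dist-g} to get nearest points $y_t \in X_g$ with $\|x_t - y_t\|_2 \to 0$, and try to conclude $\liminf_t f(x_t) \geq f^*$ so that $f^* \leq \limsup_t f(x_t) \leq d^*$. But the transfer step as you set it up has a genuine gap, and you correctly sense it: anchoring the subgradient inequality at the \emph{moving} points $y_t$ forces you to control $\|s_t\|_2$ for $s_t \in \partial f(y_t)$, which can blow up when $X_g$ is unbounded (the relevant regime here --- e.g.\ \cref{ex:counter-example} has unbounded $X_g$). Your proposed remedy does not close the gap: pulling back to $w_t = (1-\alpha_t)x^* + \alpha_t x_t$ only yields the \emph{upper} bound $f(w_t) \leq f^* + \alpha_t(f(x_t)-f^*)$, so a lower bound $\liminf_t f(w_t) \geq f^*$ transfers back to $x_t$ only if $\alpha_t$ stays bounded away from $0$, which is incompatible with confining $w_t$ to a fixed ball when $x_t$ escapes to infinity. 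So as written the argument does not go through, and the "delicate part" you flag is precisely where it breaks.

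The paper's fix is a small but decisive change of anchor: apply the gradient inequality at the \emph{fixed} optimal point $x^*$ rather than at $y_t$. Writing
\[
f(x_t) - f^* \;\geq\; \grad f(x^*)^\top (x_t - x^*) \;=\; \grad f(x^*)^\top (x_t - y_t) + \grad f(x^*)^\top (y_t - x^*),
\]
the second term is nonnegative by the first-order optimality condition for $x^*$ minimizing $f$ over the convex set $X_g \ni y_t$, so it can be dropped, and the first term is bounded below by $-\|\grad f(x^*)\|_2\, \Dist(x_t,X_g) \to 0$ because the gradient norm is now a single fixed constant. No boundedness of $X_g$, no Lipschitz control along the sequence, and no pull-back is needed. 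I recommend you replace your $y_t$-anchored subgradient step with this $x^*$-anchored decomposition; the rest of your outline (weak duality, \cref{lemma:dual-value-sequence}, and invoking \cref{thm:duality-equivalence} for the "consequently" clause) is correct and matches the paper.
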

\begin{proof}[Proof of \cref{prop:dist-g-strong-duality}]
Suppose that $\{x_t\}_{t \geq 1}$ is a sequence satisfying the premise of \cref{lemma:dual-value-sequence}: $g(x_t) \to g^*$ and $\limsup_{t \to \infty} f(x_t) \leq d^*$. Let $x^* \in X^*$ be an optimal solution to \cref{eq:convex-bilevel}, and define the sequence $\{y_t\}_{t \geq 1} \subset X_g$ by $y_t := \argmin_{y \in X_g} \|x_t - y\|_2$. (Note that the minimizer is unique since the $\ell_2$-norm is strictly convex.) By convexity of $f$, we have
\begin{align*}
f(x_t) - f^* &= f(x_t) - f(x^*)\\
&\geq \grad f(x^*)^\top (x_t - x^*)\\
&= \grad f(x^*)^\top (x_t - y_t) + \grad f(x^*)^\top (y_t - x^*)\\
&\geq -\|\grad f(x^*)\|_2 \|x_t - y_t\|_2 + \grad f(x^*)^\top (y_t - x^*)\\
&\geq -\|\grad f(x^*)\|_2 \|x_t - y_t\|_2.
\end{align*}
The second inequality follows from the Cauchy-Schwarz inequality, while the third follows since $\grad f(x^*)^\top (y_t - x^*) \geq 0$ by the optimality of $x^*$ in minimizing $f$ over $X_g$, and the fact that $y_t \in X_g$. By \cref{cond:dist-g}, we have $\|x_t - y_t\|_2 = \Dist(x_t,X_g) \to 0$ since $g(x_t) \to g^*$. Therefore, the limit inferior of both sides is
\[ \liminf_{t \to \infty} f(x_t) - f^* \geq 0. \]
In summary, we have
\[ f^* \leq \liminf_{t \to \infty} f(x_t) \leq \limsup_{t \to \infty} f(x_t) \leq d^*, \]
so strong duality holds.
\end{proof}
Unfortunately, \cref{cond:dist-g} on its own is not enough to guarantee that \cref{eq:strong-solution} holds.
\begin{example}\label{ex:counter-dist-g}
Define $X = \{(x_1,x_2,x_3) : x_1 \geq 1, 0 \leq x_2 \leq 1\} \subset \bbR^3$, and $g(x) = x_3^2$. Then $g^* = 0$ and $X_g = \{(x_1,x_2,x_3) : x_1 \geq 1, 0 \leq x_2 \leq 1, x_3 = 0\}$. Notice that for any $x \in X$, $\Dist(x,X_g) = |x_3| = \sqrt{g(x)}$, therefore \cref{cond:dist-g} is satisfied. Now define $f(x) = x_2^2/x_1$, which is convex. Then $f^* = 0$ and $X^* = \{(x_1,x_2,x_3) : x_1 \geq 1, x_2 = 0, x_3 = 0\}$. Now consider the sequence $\{x_t\}_{t \geq 1}$ defined by $x_t = (t,1,0) \in X_g$. Then $f(x_t) = 1/t \to f^*$, $g(x_t) = 0 = g^*$, $\Dist(x_t,X_g) = 0$ but $\Dist(x_t,X^*) = 1$ for all $t$. Therefore \cref{eq:strong-solution} is not satisfied.
\epr
\end{example}

\cref{cond:compact} is more powerful than \cref{cond:dist-g}, and in fact has strong connections to the study of well-posedness, which examines properties of approximation schemes for solutions of optimization (and related) problems. 
\begin{definition}[{\citet{HuangYang2006}}]\label{def:Levitin-Polyak-wellposed}
We say that \cref{eq:convex-bilevel} satisfies:
\begin{itemize}
    \item \emph{Levitin-Polyak (LP) well-posedness} if, for any sequence $\{x_t\}_{t \geq 1} \subset X$ which satisfies $\Dist(x,X_g) \to 0$ and $f(x_t) \to f^*$, there exists a subsequence $\{x_{t_k}\}_{k \geq 1} \subset \{x_t\}_{t \geq 1}$ and $\bar{x} \in X^*$ such that $x_{t_k} \to \bar{x}$.

    \item \emph{generalized LP well-posedness} if, for any sequence that satisfies \cref{eq:strong-asymptotic}, there exists a subsequence $\{x_{t_k}\}_{k \geq 1} \subset \{x_t\}_{t \geq 1}$ and $\bar{x} \in X^*$ such that $x_{t_k} \to \bar{x}$.

    \item \emph{strong generalized LP well-posedness} if, for any sequence that satisfies \cref{eq:weak-asymptotic}, there exists a subsequence $\{x_{t_k}\}_{k \geq 1} \subset \{x_t\}_{t \geq 1}$ and $\bar{x} \in X^*$ such that $x_{t_k} \to \bar{x}$.
\end{itemize}
\epr
\end{definition}
It is immediately clear that \emph{strong} generalized LP well-posedness implies generalized LP well-posedness. We will show that every sense of well-posedness in \cref{def:Levitin-Polyak-wellposed} implies \cref{cond:compact}.
It turns out that strong generalized LP well-posedness is equivalent to \cref{cond:compact} in finite dimensions (which is our setting), which follows from a result of \citet{HuangYang2006}. We will additionally show that \cref{cond:compact} ensures that strong duality holds, and thus \cref{thm:duality-equivalence} implies that generalized LP is equivalent to strong generalized LP well-posedness for \cref{eq:convex-bilevel}.

\begin{proposition}\label{prop:compact-equivalence}
The following are equivalent for \cref{eq:convex-bilevel}:
\begin{enumerate}[label=(\alph*)]
    \item \cref{cond:compact} holds.
    \item \cref{eq:convex-bilevel} is generalized LP well-posed.
    \item \cref{eq:convex-bilevel} is strongly generalized LP well-posed.
\end{enumerate}
Furthermore, if \cref{eq:convex-bilevel} is LP well-posed, then \cref{cond:compact} holds.
\end{proposition}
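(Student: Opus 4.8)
The plan is to prove the cycle of implications $(a)\Rightarrow(c)\Rightarrow(b)\Rightarrow(a)$, and to handle the final ``furthermore'' claim together with the implications $(b)\Rightarrow(a)$ and $(c)\Rightarrow(a)$ through a single unbounded-sequence argument. First I would dispatch the easy parts. The inclusion $(c)\Rightarrow(b)$ is immediate, since every sequence satisfying \cref{eq:strong-asymptotic} also satisfies \cref{eq:weak-asymptotic}, so the defining property of strong generalized LP well-posedness applies to the smaller class of sequences appearing in generalized LP well-posedness. For the implications into \cref{cond:compact}, recall that $X^*$ is closed by \cref{ass:basic}, so in our finite-dimensional setting it suffices to show $X^*$ is bounded. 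Suppose not: then there is $\{x_t\}_{t\ge1}\subset X^*$ with $\|x_t\|_2\to\infty$. Each such $x_t$ has $f(x_t)=f^*$ and $g(x_t)=g^*$, so $\{x_t\}$ trivially satisfies \cref{eq:strong-asymptotic} (hence \cref{eq:weak-asymptotic}), and also $\Dist(x_t,X_g)=0$ with $f(x_t)\to f^*$. Thus any of the three well-posedness notions in \cref{def:Levitin-Polyak-wellposed} would force a convergent subsequence, which is impossible since $\|x_t\|_2\to\infty$. This one contradiction yields $(b)\Rightarrow(a)$, $(c)\Rightarrow(a)$, and the ``furthermore'' statement that LP well-posedness implies \cref{cond:compact}.

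The substantive step is $(a)\Rightarrow(c)$. I would work with $\bar h=\max\{f-f^*,g-g^*\}$, which is convex, real-valued on $X$, and satisfies $\bar h\ge g-g^*\ge0$ on $X$. Its zero-sublevel set is exactly $\{x\in X:\bar h(x)\le0\}=\{x\in X_g:f(x)\le f^*\}=X^*$, using that $g(x)\ge g^*$ on $X$ forces $g(x)=g^*$, i.e.\ $x\in X_g$, and then $f(x)\le f^*$ forces $f(x)=f^*$. Extending $\bar h$ by $+\infty$ off the closed convex set $X$ gives a proper closed convex function whose zero-sublevel set $X^*$ is nonempty and, by \cref{cond:compact}, compact. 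Invoking the classical fact that all nonempty sublevel sets of a proper closed convex function share the same recession cone, compactness of one sublevel set forces every sublevel set $\{x\in X:\bar h(x)\le\alpha\}$ to be bounded; that is, $\bar h$ is coercive on $X$.

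Now take any $\{x_t\}$ satisfying \cref{eq:weak-asymptotic}. Since $\limsup_t(f(x_t)-f^*)\le0$ and $g(x_t)-g^*\to0$, we get $\limsup_t\bar h(x_t)=\max\{\limsup_t(f(x_t)-f^*),0\}\le0$, while $\bar h(x_t)\ge0$; hence $\bar h(x_t)\to0$ and $\{x_t\}$ eventually lies in the bounded set $\{\bar h\le1\}$. Boundedness in finite dimensions yields a convergent subsequence $x_{t_k}\to\bar x\in X$, and continuity of $f,g$ gives $g(\bar x)=g^*$ (so $\bar x\in X_g$) and $f(\bar x)\le f^*$, whence $f(\bar x)=f^*$ and $\bar x\in X^*$. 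This establishes strong generalized LP well-posedness, closing the cycle $(a)\Rightarrow(c)\Rightarrow(b)\Rightarrow(a)$.

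Finally, I would record the promised duality connection, which reproves $(b)\Leftrightarrow(c)$ and links the result to \cref{thm:duality-equivalence}. The same coercivity argument applied to the sequence furnished by \cref{lemma:dual-value-sequence} (which satisfies $g(x_t)\to g^*$ and $\limsup_t f(x_t)\le d^*\le f^*$) produces a limit point $\bar x\in X_g$ with $f^*\le f(\bar x)\le d^*$; combined with weak duality $d^*\le f^*$ this gives $d^*=f^*$, so \cref{cond:compact} implies strong duality. By \cref{thm:duality-equivalence}, under \cref{cond:compact} every sequence obeying \cref{eq:weak-asymptotic} in fact obeys \cref{eq:strong-asymptotic}, so the sequence classes in the two generalized well-posedness definitions coincide and $(b)\Leftrightarrow(c)$ follows directly. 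I expect the only genuine obstacle to be the direction $(a)\Rightarrow(c)$: turning the bare compactness of $X^*$ into uniform boundedness of all approximating sequences. The recession-cone identity for sublevel sets of a convex function is the essential lever here; alternatively, one can cite \citet{HuangYang2006} for the equivalence of \cref{cond:compact} with strong generalized LP well-posedness, at the cost of checking that their hypotheses match the present finite-dimensional convex setting.
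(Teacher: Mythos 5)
Your proof is correct and covers every claim, including the ``furthermore'' statement and the strong-duality fact that the paper's surrounding text promises. The main structural difference is in the direction $(a)\Rightarrow(c)$: the paper simply cites \citet[Theorem~2.4]{HuangYang2006} for the equivalence of \cref{cond:compact} with strong generalized LP well-posedness, whereas you prove it from scratch via the recession-cone argument (compactness of the zero-sublevel set $X^*$ of $\bar h=\max\{f-f^*,g-g^*\}$ forces all sublevel sets to be bounded, hence every sequence satisfying \cref{eq:weak-asymptotic} is eventually confined to a bounded set and has a subsequential limit in $X^*$). This makes your proof self-contained at the cost of invoking the closedness of $\bar h$, which --- like the paper's own appeal to lower semicontinuity of $g$ and its implicit use of the same ``one bounded sublevel set implies all bounded'' fact for $X_1$ --- rests on the mild regularity of convex functions at boundary points of $X$ that both arguments take for granted. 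A second, smaller difference is the logical decomposition: your cycle $(a)\Rightarrow(c)\Rightarrow(b)\Rightarrow(a)$ establishes the equivalence without any detour through duality, whereas the paper proves $(b)\Rightarrow(c)$ by first showing \cref{cond:compact} implies strong duality and then invoking \cref{thm:duality-equivalence} to upgrade \cref{eq:weak-asymptotic} to \cref{eq:strong-asymptotic}. You recover that duality content as a supplementary remark using the same \cref{lemma:dual-value-sequence} sequence, so nothing the paper needs downstream is lost; your version buys self-containment, while the paper's buys brevity by leaning on the cited well-posedness literature.
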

\begin{proof}[Proof of \cref{prop:compact-equivalence}]
First, note that if $X^*$ is not compact, then it must be unbounded, since we assume that $X^*$ is closed. Take a sequence $\{x_t\}_{t \geq 1} \subset X^*$ such that $\|x_t\|_2 \to \infty$. Then this sequence satisfies all three premises of \cref{def:Levitin-Polyak-wellposed}, however there is no convergent subsequence. Therefore all three senses of well-posedness imply compactness of $X^*$, i.e., \cref{cond:compact}. \citet[Theorem 2.4]{HuangYang2006} states that when $X$ is finite-dimensional, which we assume, then strong generalized LP well-posedness is equivalent to \cref{cond:compact}.

To show that strong generalized LP is equivalent to generalized LP, we will first show that under \cref{cond:compact}, strong duality holds. First, denote
\[ X_1 := \left\{ x \in X : f(x) - f^* \leq 1, g(x) - g^* \leq 1 \right\}. \]
Note that $X_1$ is the $1$-sublevel set of the function $\bar{h} = \max\{f-f^*,g-g^*\}$, and $X^*$ is the $0$-sublevel set. Since $X^*$ is bounded, all sublevel sets of $\bar{h}$ are bounded, hence $X_1$ is also bounded.
Let $\{x_t\}_{t \geq 1} \subset X$ be a sequence satisfying the premise of \cref{lemma:dual-value-sequence}, i.e., $g(x_t) \to g^*$ and $\limsup_{t \to \infty} f(x_t) \leq d^*$. By weak duality $d^* \leq f^*$, therefore $f(x_t) \leq f^* + 1$ eventually, and also $g(x_t) \leq g^* + 1$ eventually as well. This means that only finitely many $x_t \not\in X_1$, and since $X_1$ is bounded, the whole sequence is bounded. Take a convergent subsequence $\{x_{t_k}\}_{k \geq 1}$ such that $x_{t_k} \to \bar{x}$. Since $X$ is closed, $\bar{x} \in X$. Furthermore, since $g$ is convex, it is lower semicontinuous, so $g^* = \liminf_{k \to \infty} g(x_{t_k}) \geq g(\bar{x}) \geq g^*$, hence $\bar{x} \in X_g$. But this implies that $f(\bar{x}) \geq f^*$ by definition of $f^*$. Therefore $f^* \leq \limsup_{t \to \infty} f(x_t) \leq d^*$, and thus strong duality holds.

Now, assume that \cref{eq:convex-bilevel} is generalized LP well-posed. Then $X^*$ must be compact. Take a sequence $\{x_t\}_{t \geq 1} \subset X$ satisfying \cref{eq:weak-asymptotic}. Since compactness of $X^*$ implies strong duality, \cref{thm:duality-equivalence}, we know that $\{x_t\}_{t \geq 1}$ also satisfies \cref{eq:strong-asymptotic}. Therefore by generalized LP well-posedness, there exists $\bar{x} \in X^*$ and a subsequence for which $x_{t_k} \to \bar{x}$, hence \cref{eq:convex-bilevel} is also strongly generalized LP well-posed. Since a strongly generalized LP well-posed problem is also generalized LP well-posed, this shows the equivalence.
\end{proof}

We now use this equivalence to provide a quantitative version of \cref{eq:strong-solution}.

\begin{corollary}\label{cor:quantified}
Suppose that \cref{cond:compact} holds for \cref{eq:convex-bilevel}. Then there exists a function $c:\bbR_+^2 \to \bbR_+$ such that $c(0,0) = 0$ and for any $x \in X$,
\[ |f(x) - f^*| \geq c\left( \Dist(x,X^*), g(x) - g^* \right). \]
Furthermore, the function $c$ satisfies that whenever $\beta_t \to 0$, $c(\alpha_t,\beta_t) \to 0$, then $\alpha_t \to 0$. Therefore, under \cref{cond:compact} any sequence $\{x_t\}_{t \geq 1}$ satisfying \cref{eq:weak-asymptotic} also satisfies \cref{eq:strong-solution}.
\end{corollary}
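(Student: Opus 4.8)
The plan is to construct the function $c$ explicitly as a (truncated) infimum of the outer-objective gap, subject to a prescribed distance to $X^*$ and a prescribed cap on the constraint violation. Concretely, I would define
\[
c(\alpha,\beta) := \min\Big\{ 1,\ \inf\big\{\, |f(x) - f^*| : x \in X,\ \Dist(x,X^*) \geq \alpha,\ g(x) - g^* \leq \beta \,\big\} \Big\},
\]
with the convention that the infimum over the empty set is $+\infty$. The truncation by $1$ plays no role near the origin; it only keeps $c$ genuinely $\bbR_+$-valued when the inner feasible set is empty (which can occur for large $\alpha$).

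The two easy properties follow directly from the definition. For $c(0,0)$, the inner constraints reduce to $x \in X_g$ (since $g(x) \geq g^*$ on $X$ forces $g(x) - g^* \leq 0$ to mean $g(x) = g^*$), over which $|f(x) - f^*| = f(x) - f^*$ attains the value $0$ on $X^* \subseteq X_g$; hence $c(0,0) = 0$. For the lower bound, given any $x \in X$, observe that $x$ is itself feasible for the inner infimum with parameters $\alpha = \Dist(x,X^*)$ and $\beta = g(x) - g^*$, so $|f(x) - f^*| \geq c(\Dist(x,X^*), g(x) - g^*)$, which is exactly the claimed inequality.

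The crux is the \emph{forcing property}: if $\beta_t \to 0$ and $c(\alpha_t,\beta_t) \to 0$, then $\alpha_t \to 0$. I would argue by contradiction: if $\alpha_t \not\to 0$, pass to a subsequence with $\alpha_t \geq \delta > 0$. Since $c(\alpha_t,\beta_t) \to 0 < 1$, the truncation is eventually inactive and the inner infimum is finite, so I can select $x_t \in X$ with $\Dist(x_t,X^*) \geq \alpha_t \geq \delta$, $\ 0 \leq g(x_t) - g^* \leq \beta_t$, and $|f(x_t) - f^*| \leq c(\alpha_t,\beta_t) + 1/t$. Then $g(x_t) \to g^*$ and $f(x_t) \to f^*$, so $\{x_t\}_{t \geq 1}$ satisfies \cref{eq:strong-asymptotic}, and in particular \cref{eq:weak-asymptotic}. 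By \cref{prop:compact-equivalence}, \cref{cond:compact} is equivalent to strong generalized LP well-posedness, which yields a further subsequence $x_{t_k} \to \bar{x} \in X^*$; but then $\Dist(x_{t_k},X^*) \to 0$, contradicting $\Dist(x_t,X^*) \geq \delta$. This is the step I expect to be the main obstacle, since it is where compactness of $X^*$ is genuinely invoked (through the well-posedness equivalence), and where care is needed to guarantee that the inner feasible sets are nonempty so that the approximating points $x_t$ actually exist.

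Finally, the quantitative conclusion follows by assembling the pieces. Given a sequence $\{x_t\}_{t \geq 1}$ satisfying \cref{eq:weak-asymptotic}, \cref{cond:compact} implies strong duality (as established within the proof of \cref{prop:compact-equivalence}), so \cref{thm:duality-equivalence} upgrades \cref{eq:weak-asymptotic} to \cref{eq:strong-asymptotic}; thus $|f(x_t) - f^*| \to 0$ and $\beta_t := g(x_t) - g^* \to 0$. Setting $\alpha_t := \Dist(x_t,X^*)$, the lower bound gives $c(\alpha_t,\beta_t) \leq |f(x_t) - f^*| \to 0$, and the forcing property then forces $\alpha_t = \Dist(x_t,X^*) \to 0$, which is precisely \cref{eq:strong-solution}.
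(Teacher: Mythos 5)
Your proof is correct, but it takes a genuinely different route from the paper's. The paper's proof is a one-line citation: it invokes \cref{prop:compact-equivalence} to get generalized LP well-posedness and then appeals to \citet[Theorem 2.2]{HuangYang2006}, which supplies the function $c(\alpha,\beta) = \inf_x\{|f(x)-f^*| : x \in X,\ \Dist(x,X^*) = \alpha,\ g(x)-g^* = \beta\}$ with \emph{equality} constraints. You instead build $c$ from scratch with \emph{inequality} constraints ($\Dist(x,X^*) \geq \alpha$, $g(x)-g^* \leq \beta$) plus a truncation at $1$, and prove the forcing property directly by a contradiction argument that extracts near-minimizers $x_t$ and applies the (strong generalized) LP well-posedness from \cref{prop:compact-equivalence} to obtain a convergent subsequence into $X^*$. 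Your argument is sound: the verification of $c(0,0)=0$ and of the pointwise lower bound are immediate from feasibility of $x$ itself, and the contradiction step correctly handles the case where the inner feasible set is empty (the truncation keeps $c$ finite, and $c(\alpha_t,\beta_t)\to 0 < 1$ forces nonemptiness eventually). What your approach buys is self-containment --- everything reduces to \cref{prop:compact-equivalence} with no external quantitative theorem --- and your definition of $c$ is arguably better behaved than the paper's, since the equality-constrained feasible set in the paper's $c$ can be empty for many $(\alpha,\beta)$, making that $c$ extended-real-valued. What the paper's route buys is brevity and a direct link to the established well-posedness literature. The final assembly step (strong duality from \cref{cond:compact}, upgrade via \cref{thm:duality-equivalence}, then the forcing property) matches the intended logic of the corollary.
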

\begin{proof}[Proof of \cref{cor:quantified}]
\cref{prop:compact-equivalence} states that \cref{cond:compact} is equivalent to generalized LP well-posedness. \citet[Theorem 2.2]{HuangYang2006} then ensures that whenever \cref{eq:convex-bilevel} is generalized LP well-posed, the function $c$ that satisfies the required properties is
\[ c(\alpha,\beta) := \inf_x \left\{ |f(x) - f^*| : x \in X, \Dist(x,X^*) = \alpha, g(x) - g^* = \beta \right\}. \]
\end{proof}

\section{Conclusion}\label{sec:conclusion}

We studied convergence guarantees for the convex bilevel problem \eqref{eq:convex-bilevel}. Through \cref{ex:counter-example} we showed that typical bounds \eqref{eq:weak-asymptotic} are insufficient: they can hold while neither the objective values nor the iterates approach a bilevel solution in a meaningful sense. Our main result establishes an exact remedy: \emph{strong duality} for the value-function formulation \eqref{eq:convex-bilevel-value} is \emph{equivalent} to the property that every sequence satisfying \eqref{eq:weak-asymptotic} necessarily enjoys objective convergence \eqref{eq:strong-asymptotic} (\cref{thm:duality-equivalence}).

We then identified two simple conditions ensuring this regularity. First, the proximity assumption \cref{cond:dist-g} guarantees strong duality and hence \eqref{eq:strong-asymptotic}; however, as \cref{ex:counter-dist-g} demonstrates, it does not ensure convergence to the optimal solution set \eqref{eq:strong-solution}. Second, the compactness assumption \cref{cond:compact} is stronger but widely utilized in previous work. It yields strong duality of \cref{eq:convex-bilevel} and solution-set convergence (\cref{prop:compact-equivalence}) together with quantifiable bounds. These findings provide algorithm-agnostic criteria under which iterate sequences for \cref{eq:convex-bilevel} have meaningful limits.

Interestingly, \cref{prop:compact-equivalence} and \cref{ex:counter-dist-g} shows that \cref{eq:convex-bilevel} may still be ill-posed (in the sense of \cref{def:Levitin-Polyak-wellposed}) even when strong duality holds. Future work includes investigating the implications of the stronger notion of dual solvability on approximating sequences for \cref{eq:convex-bilevel}. Furthermore, obtaining exact forms of the quantified bound in \cref{cor:quantified} for important bilevel problems is of interest.

\end{document}